\newtheorem{theorem}{Theorem}
\newtheorem{proposition}{Proposition}
\theoremstyle{definition}
\newtheorem{example}{Example}
\newtheorem{definition}{Definition}
\begin{document}
\title{Strong and weak (1, 2, 3) homotopies on knot projections}
\thanks{MSC2010, MSC2000: 57M25; 57Q35.\\
The work of N.~Ito was partly supported by a Waseda University Grant for Special Research Projects (Project number: 2014K-6292) and the JSPS Japanese-German Graduate Externship.}
\keywords{generic spherical curves; knot projections; Reidemeister moves; homotopy}
\author{Noboru Ito}
\address{Waseda Institute for Advanced Study, 1-6-1 Nishi Waseda Shinjuku-ku Tokyo 169-8050 Japan}
\email{noboru@moegi.waseda.jp}
\author{Yusuke Takimura}
\address{Gakushuin Boys' Junior High School, 1-5-1 Mejiro Toshima-ku Tokyo 171-0031, Japan}
\email{Yusuke.Takimura@gakushuin.ac.jp}
\date{June 6, 2015}
\maketitle

\begin{abstract}
A knot projection is an image of a generic immersion from a circle into a two-dimensional sphere.  We can find homotopies between any two knot projections by local replacements of knot projections of three types, called Reidemeister moves.    
This paper defines an equivalence relation for knot projections called weak (1, 2, 3) homotopy, which consists of Reidemeister moves of type 1, weak type 2, and weak type 3.  This paper defines the first non-trivial invariant under weak (1, 2, 3) homotopy.   
We use this invariant to show that there exist an infinite number of weak (1, 2, 3) homotopy equivalence classes of knot projections.  
By contrast, all equivalence classes of knot projections consisting of the other variants of a triple type, i.e., Reidemeister moves of (1, strong type 2, strong type 3), (1, weak type 2, strong type 3), and (1, strong type 2, weak type 3), are contractible.    
\end{abstract}

\section{Introduction.}
A {\it{knot projection}} is defined as an image of a generic immersion of a circle into a two-dimensional sphere.   A {\it{knot diagram}} is a knot projection specifying information of over/under-crossing branches.  
A {\it{trivial}} knot projection is defined as a knot projection with no double points, namely a simple closed curve on a $2$-sphere.  It is well known that an arbitrary knot projection can be related to a trivial knot projection by through RI (Reidemeister move of type 1), RI\!I (Reidemeister move of type 2), and RI\!I\!I (Reidemeister move of type 3), as shown in Fig.~\ref{f1}.  Here, each crossing in Fig.~\ref{f1} is a {\it{flat crossing}}.  A flat crossing is a double point of a knot diagram without over/under information.  
\begin{figure}[h!]
\includegraphics[width=8cm]{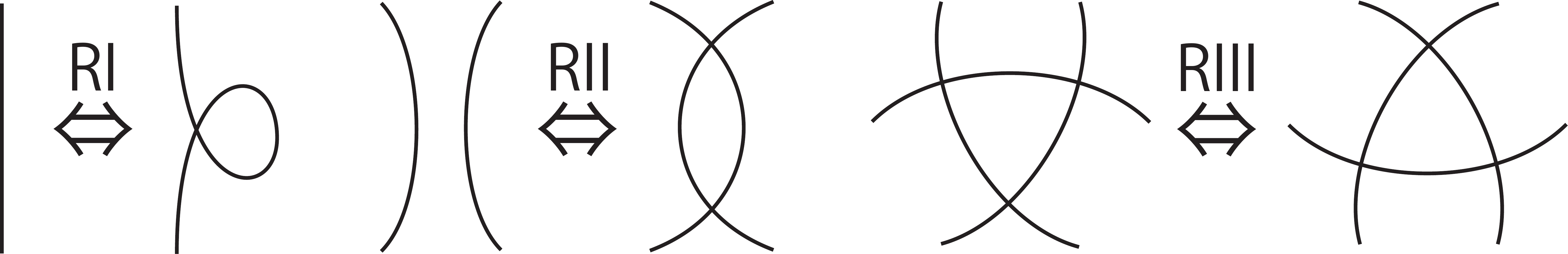}
\caption{RI, RI\!I, and RI\!I\!I.}\label{f1}
\end{figure}

RI\!I (resp.~RI\!I\!I) can be decomposed into exactly two types, namely, strong RI\!I (resp.~strong RI\!I\!I) and weak RI\!I (resp.~weak RI\!I\!I), as shown in Fig.~\ref{f1a}.
\begin{figure}[h!]
\includegraphics[width=10cm]{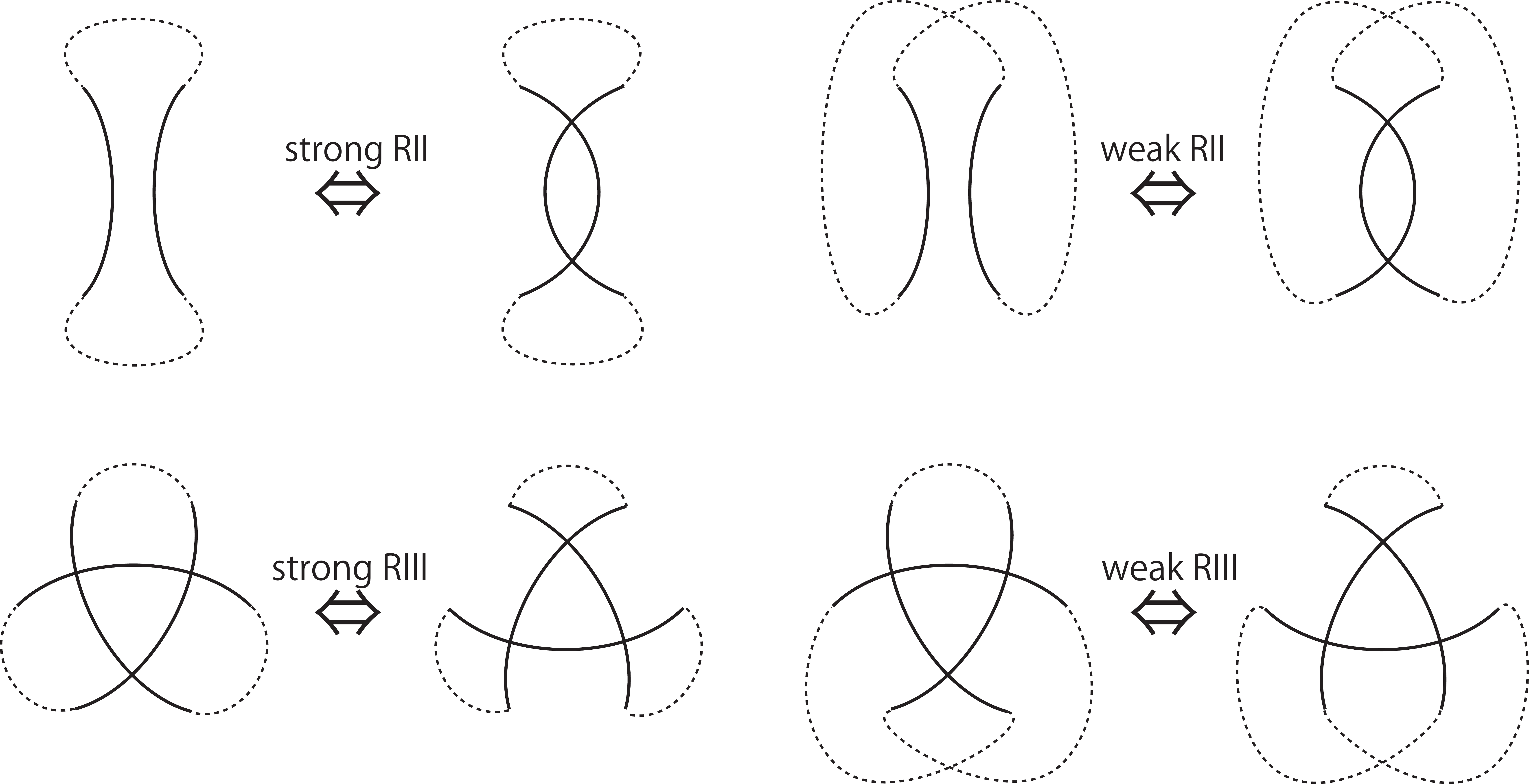}
\caption{Strong RI\!I and weak RI\!I (upper), strong RI\!I\!I, and weak RI\!I\!I (lower).}\label{f1a}
\end{figure}
Here, we introduce equivalence relations, called {\it{strong}} (1, 2, 3) {\it{homotopy}} and {\it{weak}} (1, 2, 3) {\it{homotopy}}, which are defined as follows.       
Two knot projections are said to be {\it{strongly}} (resp.~{\it{weakly}}) (1, 2, 3) {\it{homotopic}} if the two knot projections are related by a finite sequence consisting of RI, strong RI\!I (resp.~weak RI\!I), and strong  RI\!I\!I (resp.~weak RI\!I\!I).   
\begin{theorem}\label{thm1}

\noindent(1) Any two knot projections are strongly (1, 2, 3) homotopic.  
 
\noindent(2) There exists infinitely many knot projections $\{ P_i \}$ that are not weakly (1, 2, 3) homotopic to a trivial knot projection, where $P_i$ is not weakly (1, 2, 3) homotopic to $P_j$ for any two nonnegative integers $i$ and $j$, such that $i \neq j$.  
\end{theorem}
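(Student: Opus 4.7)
For part (1), the plan is to prove the stronger statement that every knot projection is strongly $(1,2,3)$ homotopic to a trivial knot projection; part (1) then follows by transitivity. Since it is classical that the three Reidemeister moves together suffice to contract any knot projection, it suffices to show that every weak RI\!I and every weak RI\!I\!I can be simulated by a finite sequence of RI, strong RI\!I, and strong RI\!I\!I. My approach is the standard trick of inserting a small RI kink on one of the arcs involved in the weak move, which converts the local configuration into one on which the corresponding strong move applies; after that strong move, the auxiliary kink is removed by another RI. The only subtlety is verifying that the kink can always be placed on the correct side, which reduces to a short local case analysis on the two variants of RI\!I and the two variants of RI\!I\!I in Fig.~\ref{f1a}.

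For part (2), the plan is to exhibit a nontrivial invariant $X$ of weak $(1,2,3)$ homotopy that takes infinitely many values on knot projections, together with an explicit family $\{P_i\}_{i\ge 0}$ whose invariant values are pairwise distinct and all differ from the value on a trivial projection. I would define $X$ as a sum over the double points of local weights read from the Gauss (chord) diagram of the projection. The weights must be chosen so that: RI inserts or removes an isolated chord contributing $0$; weak RI\!I inserts or removes a pair of chords whose contributions cancel each other; and weak RI\!I\!I permutes three chords in a manner preserving the total weight.

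The main obstacle will be the simultaneous enforcement of these three cancellation conditions, in particular the invariance under weak RI\!I\!I, since the local chord pattern is rearranged in a genuinely different way than under strong RI\!I\!I; by contrast, invariance under RI and weak RI\!I is essentially a matter of choosing the normalization of the local weight correctly. Once $X$ has been defined and its invariance verified, producing the family $\{P_i\}$ should be comparatively easy: I would isolate a small building-block projection $B$ on which $X$ is shown to increase by a fixed positive amount, and let $P_i$ be a connected sum of $i$ copies of $B$ (or an analogous iterated construction such as nested loops of a specified type). Provided the weight defining $X$ is sufficiently local to be additive under this operation, $X(P_i)\ne X(P_j)$ for $i\ne j$, yielding the infinite family required in statement (2).
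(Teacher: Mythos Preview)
For part~(1) your overall strategy coincides with the paper's: since RI, RI\!I, RI\!I\!I together connect all knot projections, it suffices to simulate each weak move by a finite sequence of RI, strong RI\!I, and strong RI\!I\!I. However, the specific mechanism you propose---insert an RI kink, apply the corresponding strong move, remove the kink---is not how the local computation actually runs, and in particular it does not work for RI\!I\!I: an RI kink on one of the three arcs adds a self-crossing but leaves the cyclic orientation type of the triangle unchanged, so the configuration is still a weak RI\!I\!I. The paper's Figure~\ref{f3} carries out the simulation differently: a weak RI\!I\!I is realised as two strong RI\!I's together with one strong RI\!I\!I (no RI needed), while a weak RI\!I is realised as two RI's, one strong RI\!I, and one strong RI\!I\!I. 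Your plan for~(1) is therefore on the right track, but the ``RI kink'' step must be replaced by these correct local sequences.

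For part~(2) your proposal is genuinely different from the paper's and contains a real gap. You list the desiderata for a Gauss-sum invariant $X$ (an isolated chord contributes~$0$, the two chords from a weak RI\!I cancel, weak RI\!I\!I preserves the total), but you never define $X$, and you yourself flag weak RI\!I\!I invariance as the main obstacle without indicating how to overcome it. There is no a~priori reason a local chord-weight with all three properties exists, and the paper does \emph{not} proceed this way. Instead it introduces the explicit invariant
\[
W(P)=tr(P)-2g(P)=tr(P)+s(P)-c(P)-1,
\]
where $tr$ is the trivializing number and $s$, $c$ are the Seifert circle number and crossing number. Invariance under RI, weak RI\!I, and weak RI\!I\!I is verified by a short table recording the change in $tr$, $s$, $c$ under each move (together with the known fact that $tr$ is unchanged by RI and weak RI\!I\!I). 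The infinite family then comes from additivity under connected sum, $W(P\sharp P')=W(P)+W(P')$, combined with the single computation $W(7_4)=2$; prime examples with arbitrary even $W$ are also exhibited. Your connected-sum idea for building $\{P_i\}$ matches this last step, but the hard part---producing the invariant---should be replaced by the concrete $W$ above rather than an unspecified Gauss-diagram weight.
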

To prove Theorem~\ref{thm1} (2), we find a non-negative integer-valued new invariant $W$ under RI, weak RI\!I, and weak RI\!I\!I.  

Here, we explain the historical position of this study.  Equivalence classes of knot projections, on a plane and sphere, have been studied by many researchers.  For example, Arnold introduced invariants, $J^+$, $J^-$, and $St$, for plane curves \cite{arnold}.  However, almost all these studies treat plane curves (i.e., knot projections on a plane) under regular homotopy, i.e., without RI.  To the best of our knowledge, only $J^+ + 2St$, given by the Arnold's invariants, is invariant under RI for knot projections on a sphere.  Studies of equivalence classes of knot projections under homotopy containing RI are: \cite{khovanov, IT} for RI and RI\!I, \cite{IT_circle, IT_triple} for a pair of RI and strong or weak RI\!I, \cite{HY} for RI and RI\!I\!I, \cite{ITT} for RI and strong RI\!I\!I, and \cite{IT} for RI and weak RI\!I\!I.  

However, this is the first paper that considers equivalence classes consisting of a triple of Reidemeister moves: (RI, strong RI\!I, strong RI\!I\!I) and (RI, weak RI\!I, weak RI\!I\!I).  

The reminder of this paper is organized as follows.  Section~\ref{sec2} defines a new invariant $W$ under RI, weak RI\!I, and weak RI\!I\!I.  
Section~\ref{sec4} introduces properties of $W$.  Section~\ref{sec3} obtains a proof of Theorem~\ref{thm1}.   Section~\ref{sec5} demonstrates that every knot projection under each equivalence relation consisting of the other variants of a triple tuple of Reidemeister moves, namely (RI, strong RI\!I, weak RI\!I\!I) or (RI, weak RI\!I, strong RI\!I\!I), is equivalent to a trivial knot projection.  

\section{Definition of a new invariant $W$.}\label{sec2}
Let $P$ be an arbitrary knot projection and the number of double points be denoted by $c(P)$.  {\it{Seifert circle number}} $s(P)$ is defined as follows.  
\begin{figure}[h!]
\includegraphics[width=4cm]{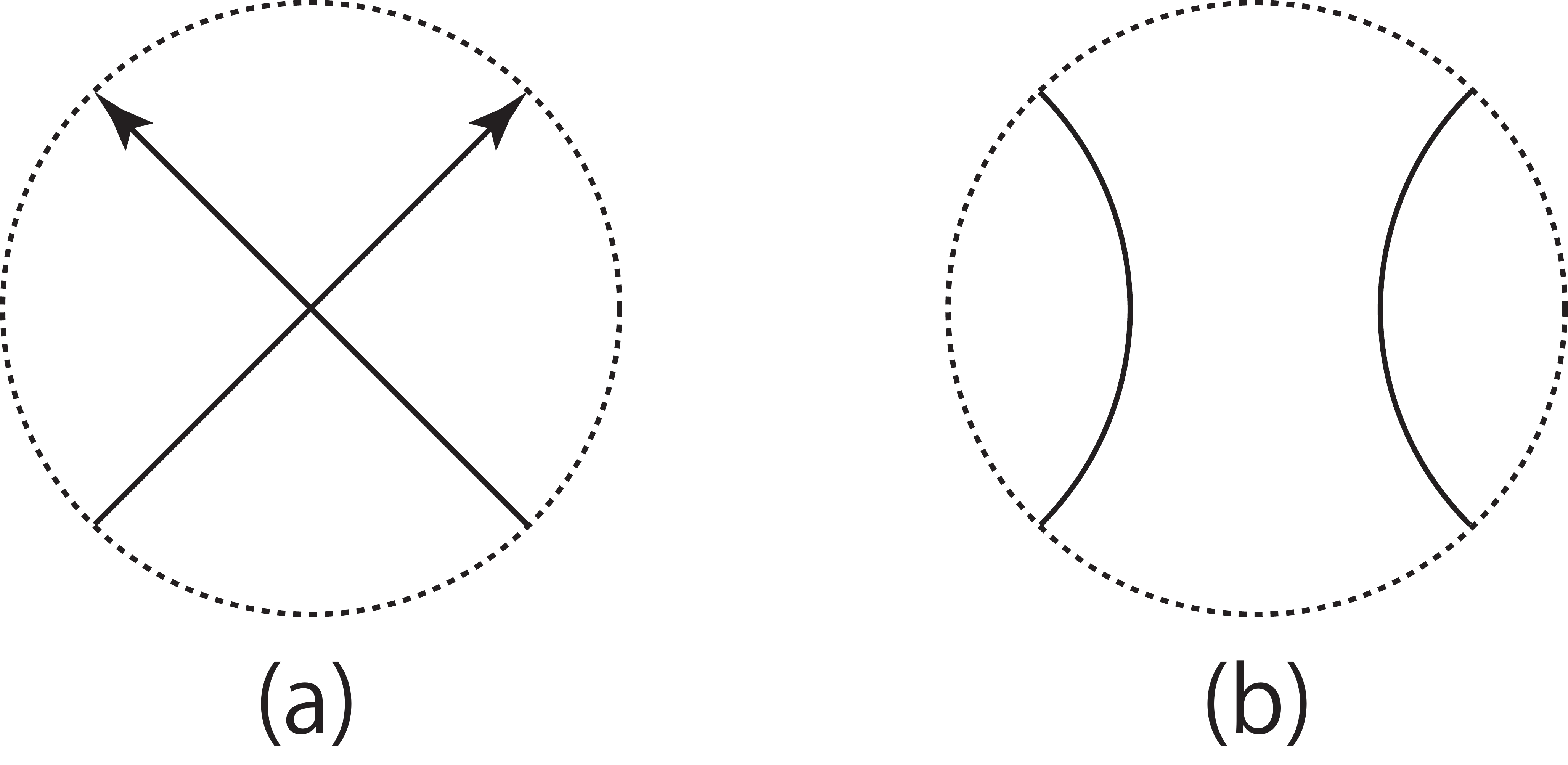}
\caption{(a) Disk as a neighborhood of a double point; (b) Disk corresponding to (a) after applying the replacement.}\label{f2}
\end{figure}
When $P$ has an arbitrary orientation $\sigma$, a sufficient small neighborhood of each double point is isotopic to Fig.~\ref{f2} (a).  We replace the disk in Fig.~\ref{f2} (a) with that in Fig.~\ref{f2} (b).  After replacing all the double points, we obtain an arrangement $S(P)$ of circles on the sphere.  By definition, $S(P)$ does not depend on the orientation $\sigma$ of $P$.  Each circle in $S(P)$ is called a {\it{Seifert circle}}.  Seifert circle number $s(P)$ is defined as the number of circles in $S(P)$.  It is well-known that for any knot diagram $D_P$ obtained by arbitrarily obtaining information of any over/under-crossing branches, the {\it{canonical genus}} of $D_P$ is equivalent to $c(P)-s(P)+1$; $c(P)-s(P)+1$ is denoted by $g(P)$.  Note that all knot diagrams obtained by the knot projection $P$ have the same canonical genus.   

A {\it{chord diagram}} of the knot projection $P$ is defined as the immersing circle with even points on the circle, which correspond to the preimages of double points of $P$ where each pair of preimages of a double point is connected by a chord.  The {\it{trivializing number}} $tr(P)$ of the knot projection $P$ is the minimum number of erased chords in the chord diagram of $P$ until there is no cross chord, i.e., there are no intersecting chords as $\otimes$, embedded in the whole chord diagram  
\cite[Page 440, Theorem 13]{hanaki}.  
\begin{theorem}\label{thm2}
Let $P$ be an arbitrary knot projection.  Then,
\[W(P) = tr(P) - 2g(P) \] is invariant under RI, weak RI\!I, and weak RI\!I\!I.    
\end{theorem}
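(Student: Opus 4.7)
The plan is to verify invariance of $W = tr - 2g = tr - 2(c - s + 1)$ under each of RI, weak RII, and weak RIII separately, by computing the local changes $\Delta tr$, $\Delta c$, $\Delta s$ for each move and checking the identity $\Delta tr = 2(\Delta c - \Delta s)$ in every case.

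For RI the analysis is direct. The move adds one crossing as a small kink, contributing an \emph{isolated} chord to the chord diagram --- its two endpoints are adjacent on the base circle with nothing between them on one side, so no other chord can intersect it --- hence no chord crossings are created or destroyed, and $\Delta tr = 0$. Seifert-smoothing the kink produces one extra small Seifert circle, so $\Delta s = \Delta c = \pm 1$ and $\Delta g = 0$, giving $\Delta W = 0$.

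For weak RII and weak RIII the strategy is the same but the calculations are more delicate. From the orientation conventions in Figure~\ref{f1a} we perform two local computations in a disk containing the move: a Seifert-smoothing analysis to determine $\Delta s$ (and hence $\Delta g = \Delta c - \Delta s$), and a chord-diagram analysis to determine $\Delta tr$. For weak RII, which has $\Delta c = 2$, the target is $\Delta tr = 4 - 2\Delta s$; for weak RIII, which has $\Delta c = 0$, the target is $\Delta tr = -2\Delta s$ (and in fact we expect both sides to vanish, with the move acting as a ``slide'' of one chord across two others).

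The main technical obstacle lies in controlling $\Delta tr$ in the weak RII case. Since $tr$ is defined by a global minimization --- the minimum number of chord erasures needed to remove all chord crossings --- adding two new chords can in principle change $tr$ by an amount depending on the pre-existing chord structure around the bigon. The plan is to partition the old chords according to how their endpoints sit relative to the bigon (entirely on one side versus ``straddling'') and to analyze how a maximum non-crossing chord subset can be extended across the move. The weak-RII orientation convention from Figure~\ref{f1a} should be the precise hypothesis that forces the chord-diagram change and the Seifert-smoothing change to cancel in exactly the required way, producing $\Delta tr = 2(\Delta c - \Delta s)$.
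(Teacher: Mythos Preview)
Your overall strategy---compute the local changes $\Delta tr$, $\Delta c$, $\Delta s$ under each move and check that $\Delta W=0$---is exactly the paper's. Two points of divergence are worth flagging.

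First, a formula error that would make your check fail. You expand $W = tr - 2(c-s+1)$, taking $g = c-s+1$ as written in the paper's definitions, but the canonical genus of a knot is actually $(c-s+1)/2$; the paper silently uses the correct value in its own proof, writing $W = tr + s - c - 1$. With your expansion the required identity is $\Delta tr = 2(\Delta c - \Delta s)$, whereas the correct one is $\Delta tr = \Delta c - \Delta s$. Under weak R\!I\!I one has $(\Delta tr,\Delta s,\Delta c)=(2,0,2)$, which satisfies the latter but not your target $\Delta tr = 4 - 2\Delta s = 4$. Fix the expansion before carrying out the casework.

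Second, the paper's argument is far terser than your plan. For weak R\!I\!I\!I it simply observes that $c$ and $s$ are visibly unchanged and \emph{cites} \cite{ITT} for the invariance of $tr$, bypassing any chord-diagram analysis. For R\!I and weak R\!I\!I it merely tabulates $(\Delta tr,\Delta s,\Delta c)=(0,+1,+1)$ and $(+2,0,+2)$ without justification. The value $\Delta s=0$ for weak R\!I\!I is immediate once you note that the two strands are coherently oriented, so Seifert-smoothing the two new crossings just undoes the bigon. The value $\Delta tr=2$---which you correctly identify as the subtle point, and which the paper does not argue---need not require the partition/extension machinery you sketch: the two new chords cross each other (forcing at least one into any trivializing set) and, because their endpoint pairs are adjacent on the circle, they cross exactly the same set of old chords; together with the evenness of $tr$ for knot projections this pins down $\Delta tr=2$.
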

\begin{proof}
We check the differences $tr(P)$, $s(P)$, and $c(P)$ because $W(P)$ $=$ $tr(P)-2g(P)$ $=$ $tr(P)$ $+$ $s(P)$ $-$ $c(P)$ $-1$.  
By definitions, it is clear that $c(P)$ and $s(P)$ are invariant under weak RI\!I\!I.  We also recall that there is a fact that $tr(P)$ is invariant under RI and weak RI\!I\!I (cf.~\cite{ITT}).   
Thus, we check the differences before and after the application RI and weak RI\!I (Table~\ref{t1}).  A single RI (resp.~weak RI\!I) with increasing $c(P)$ is denoted by $1a$ (resp.~$w2a$).  
\begin{table}[h!]
\caption{The differences before and after the application of $1a$ and $w2a$.}\label{t1}
\begin{tabular}{|c|c|c|c|} \hline
& $tr(P)$ & $s(P)$ & $c(P)$ \\ \hline
$1a$ & $0$ & $+1$ & $+1$ \\ \hline
$w2a$ & $+2$ & $0$ & $+2$ \\ \hline
\end{tabular}
\end{table}
\end{proof}

\begin{example}\label{eg1}
\begin{figure}[h!]
\includegraphics[width=5cm]{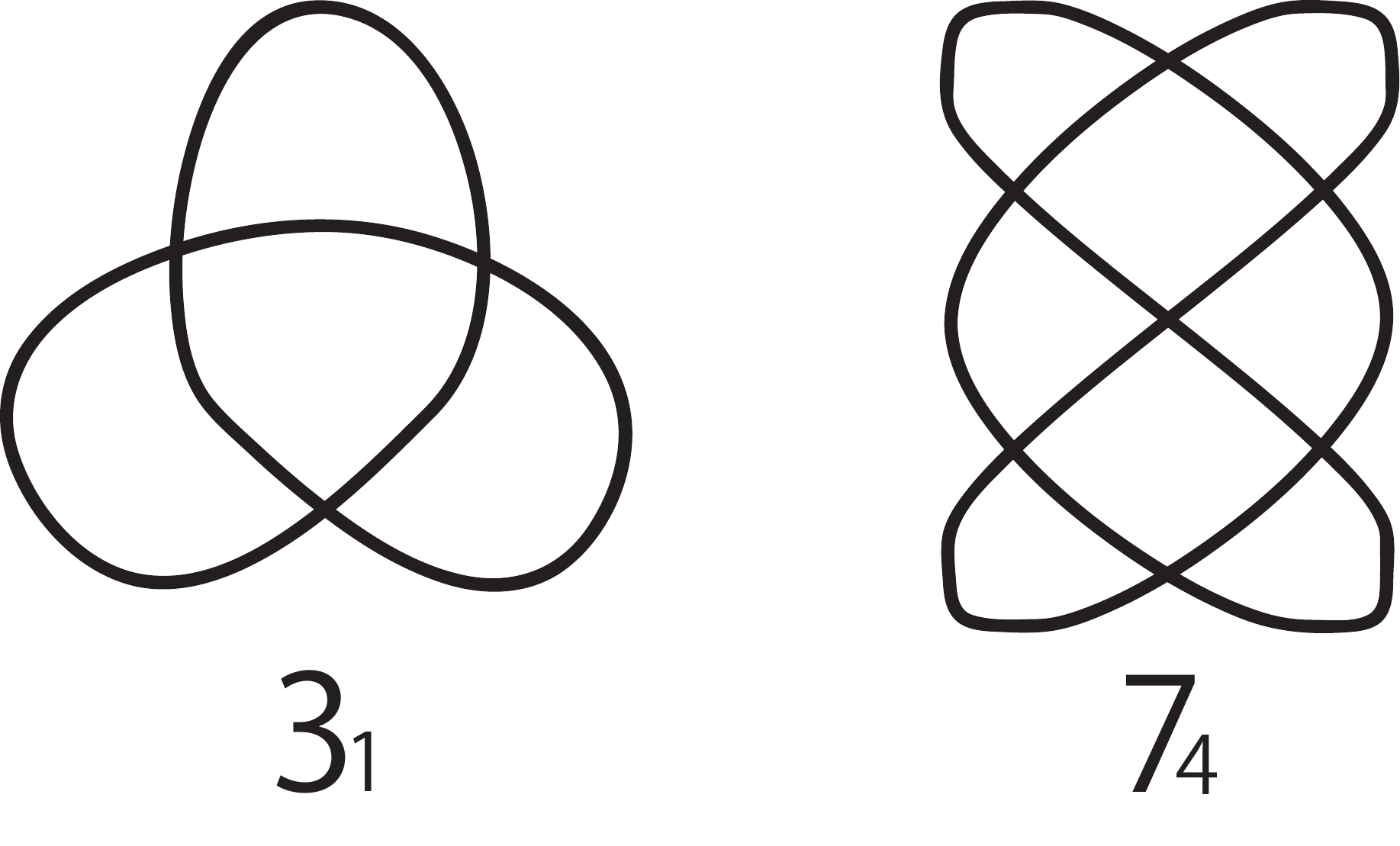}
\caption{$3_1$ and $7_4$.}\label{f3b}
\end{figure}
Let $3_1$ and $7_4$ be knot projections, as shown in Fig.~\ref{f3b}.  $W(3_1)$ $=$ $tr(3_1)$ $+$ $s(3_1)$ $-$ $c(3_1)$ $-1$ $=$ $2$ $+$ $2$ $-$ $3$ $-1$ $=$ $0$.  $W(7_4)$ $=$ $tr(7_4)$ $+$ $s(7_4)$ $-$ $c(7_4)$ $-1$ $=$ $4$ $+$ $6$ $-$ $7$ $-1$ $=$ $2$.  
\end{example}
\section{Properties of $W$.}\label{sec4}
In this section, we introduce the properties of $W$.  The {\it{connected sum}} of $P_1$ and $P_2$ is defined in Fig.~\ref{f4}.
\begin{figure}[h!]
\includegraphics[width=8cm]{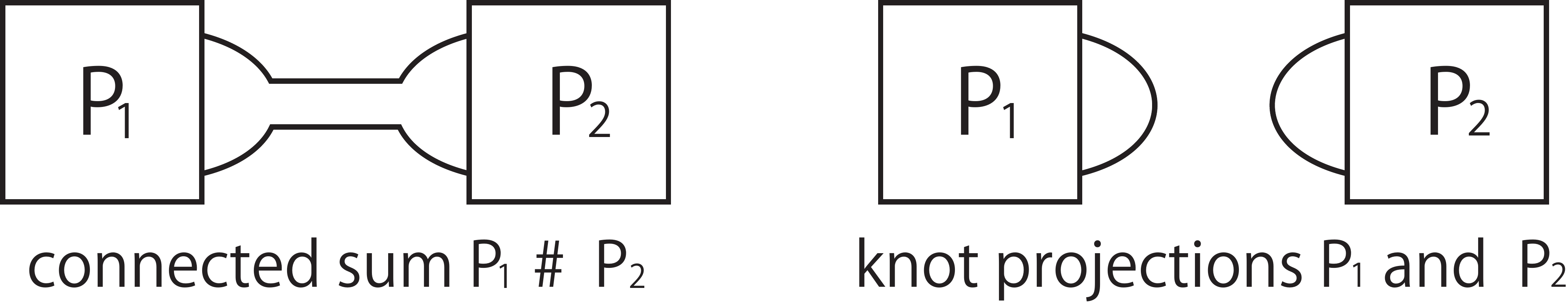}
\caption{Connected sum $P_1 \sharp P_2$ for knot projections $P_1$ and $P_2$.}\label{f4}
\end{figure}
If a knot projection $P$ is non-trivial and cannot be a connected sum of non-trivial knot projections, $P$ is called a {\it{prime}} knot projection.  
\begin{theorem}\label{thm3}
Let $P$ and $P'$ be arbitrary two knot projections.  Then, $W(P)$ has the following properties.  
\begin{enumerate}
\item $W(P)$ is an even integer.  \label{1}
\item $W(P \sharp P')$ $=$ $W(P) + W(P')$.\label{2}
\item For an arbitrary even nonnegative integer $m$, there exists a knot projection $P$ such that $W(P)=m$.  Further, a prime knot projection $P$ can be chosen.  \label{3}
\item $0 \le W(P) \le c(P)-1$ for an arbitrary non-trivial knot projection $P$.  
\item If $W(P)=c(P)-1$, $P$ is the knot projection that appears as $\infty$.  
\end{enumerate}
\end{theorem}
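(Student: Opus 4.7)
The plan is to address the five items in turn, always working with the form $W(P) = tr(P) + s(P) - c(P) - 1$ derived in the proof of Theorem~\ref{thm2}. For~(1), since the canonical genus is an integer, $c(P) - s(P) + 1$ is even, so $s(P) - c(P) - 1$ is even and $W(P) \equiv tr(P) \pmod{2}$; the evenness of $tr(P)$ itself is a known property of the trivializing number for spherical curves, which one can either cite from~\cite{hanaki} or establish by observing that $tr(P) \bmod 2$ is invariant under every Reidemeister move and then invoking Theorem~\ref{thm1}(1) to reduce to the trivial projection. Item~(2) is a direct calculation once the three ingredients are seen to be additive under connected sum: $c(P_1 \sharp P_2) = c(P_1) + c(P_2)$ is immediate, $s(P_1 \sharp P_2) = s(P_1) + s(P_2) - 1$ because the connect-sum band fuses two Seifert circles into one, and $tr(P_1 \sharp P_2) = tr(P_1) + tr(P_2)$ is standard; substitution into the $W$-formula collapses one of the two $-1$'s and yields the claim.

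For~(3), I would produce an explicit prime family. Using $3_1$ with $W = 0$ and $7_4$ with $W = 2$ (Example~\ref{eg1}) as base cases, I would define $P_n$ recursively by splicing into a single edge of $P_{n-1}$ a small local tangle chosen to raise $W$ by exactly $2$, and arrange the splice deep enough inside the diagram that no separating simple closed curve on $S^2$ can split off a non-trivial piece with fewer than four transverse intersections, thereby preserving primality. Direct substitution into the $W$-formula verifies the recursion $W(P_n) = 2n$.

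For~(4), the two inequalities rephrase as $c(P) + 1 \le tr(P) + s(P) \le 2c(P)$. The upper bound follows from the two elementary estimates $s(P) \le c(P) + 1$ (equivalent to $g(P) \ge 0$) and $tr(P) \le c(P) - 1$ for non-trivial $P$ (leaving a single chord in place destroys all crossings). The lower bound is equivalent to the inequality $tr(P) \ge 2g(P)$ between the trivializing number and the canonical genus; I would derive it by analysing a minimal trivializing set together with the changes in Seifert-circle count as the erased chords are reintroduced, or alternatively by induction on $c(P)$ using the local effect of a single crossing-reduction. I expect this lower bound to be the main technical obstacle of the theorem.

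For~(5), equality $W(P) = c(P) - 1$ forces both inequalities of~(4) to be tight, so $s(P) = c(P) + 1$ and $tr(P) = c(P) - 1$. The first equality is equivalent to $g(P) = 0$, which by tracing through the Seifert algorithm is in turn equivalent to the chord diagram of $P$ being non-crossing, i.e.\ $tr(P) = 0$. Combined with $tr(P) = c(P) - 1$, this forces $c(P) = 1$, and the only non-trivial knot projection with a single crossing is the one labelled $\infty$.
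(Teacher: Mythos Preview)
Items (1), (2), and the upper bound in (4) agree with the paper's arguments. For the lower bound in (4) the paper does not re-derive $tr(P)\ge 2g(P)$ at all; it simply cites \cite[Proof of Theorem~7.11]{henrich+}. Since you do not actually carry out either of your proposed strategies, you should either complete one of them or cite the same source.

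Your argument for (5) is genuinely different from the paper's and, once its key step is justified, more self-contained. The paper invokes Hanaki's classification \cite[Theorem~1.11]{hanakiOsaka} of projections with $tr(P)=c(P)-1$; these form an explicit one-parameter family with $s(P)=2$, so $s(P)=c(P)+1$ forces $c(P)=1$. Your route via ``$g(P)=0\Rightarrow tr(P)=0$'' avoids that citation, but you assert the implication without proof. It can be justified: $s(P)=c(P)+1$ makes the Seifert graph a tree, a leaf of which is a Seifert circle touching a single crossing, and that crossing is necessarily nugatory; removing it by RI preserves $s=c+1$, so by induction $P$ reduces to the trivial projection under RI alone, whence $tr(P)=tr(O)=0$.

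The genuine gap is item (3). Your ``recursive splicing'' is not a construction: you never specify which local tangle raises $W$ by exactly $2$, you do not verify that such a tangle exists or compute its effect on $tr$, $s$, $c$, and the primality argument is only gestured at (the phrase ``fewer than four transverse intersections'' is not the primality condition). The paper bypasses all of this by writing down explicit prime families with computable invariants: the two-bridge projections $p(\alpha_1,\alpha_2)$ with even $\alpha_1\ge\alpha_2\ge 4$, for which $tr=\alpha_2$, $s=\alpha_1+\alpha_2-1$, $c=\alpha_1+\alpha_2$, hence $W=\alpha_2-2$; and the pretzel projections $q(\beta_1,\beta_2,\beta_3)$ with odd $\beta_i\ge 3$, for which $W=\beta_2+\beta_3-2$. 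Either family realizes every positive even integer, and primality is immediate from the rational/pretzel structure. You should replace the vague recursion with a concrete family of this kind.
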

\begin{proof}
\begin{enumerate}
\item $tr(P)$, $s(P)$, and $c(P)$ change by $\pm 2$ or $0$ under a single strong RI\!I or a single strong RI\!I\!I.  Thus, we have the claim.  
\item It is clear that $tr(P \sharp P')$ $=$ $tr(P)$ $+$ $tr(P')$, $s(P \sharp P')$ $=$ $s(P)$ $+$ $s(P')$ $-1$, and $c(P \sharp P')$ $=$ $c(P)$ $+$ $c(P')$.   
\item Note that $W(3_1)=0$ and $W(7_4)=2$ (see Example~\ref{eg1}).  Thus, by using Theorem~\ref{thm3}~(\ref{2}) and considering any connected sum, we have the former claim.  For the latter claim, consider a rational knot projection $p(\alpha_1, \alpha_2)$ with a pair of even integers $(\alpha_1, \alpha_2)$ such that $\alpha_1 \ge \alpha_2 \ge 4$, as shown in Fig.~\ref{f5}, where $\alpha_1$ and $\alpha_2$ represent the number of double points.  We have $tr(p(\alpha_1, \alpha_2))$ $=$ $\alpha_2$, $s(p(\alpha_1, \alpha_2))$ $=$ $\alpha_1$ $+$ $\alpha_2$ $-1$, and $c(p(\alpha_1, \alpha_2))$ $=$ $\alpha_1+\alpha_2$.  Thus, $W(p(\alpha_1, \alpha_2))=\alpha_2 - 2$.   

Alternatively, consider a pretzel knot projection $q(\beta_1, \beta_2, \beta_3)$ with a tuple of odd integers $(\beta_1, \beta_2, \beta_3)$ such that $\beta_1 \ge \beta_2 \ge \beta_3 \ge 3$, as shown in Fig.~\ref{f6}, where $\beta_1$, $\beta_2$, and $\beta_3$ represent the number of double points.  We have $tr(q(\beta_1, \beta_2, \beta_3))$ $=$ $\beta_2$ $+$ $\beta_3$, $s(q(\beta_1, \beta_2, \beta_3))$ $=$ $\beta_1$ $+$ $\beta_2$ $+$ $\beta_3$ $-1$, and $c(q(\beta_1, \beta_2, \beta_3))$ $=$ $\beta_1$ $+$ $\beta_2$ $+$ $\beta_3$.  Thus, $W(q(\beta_1, \beta_2, \beta_3))=\beta_2 + \beta_3 -2$.  
\begin{figure}[h!]
\includegraphics[width=5cm]{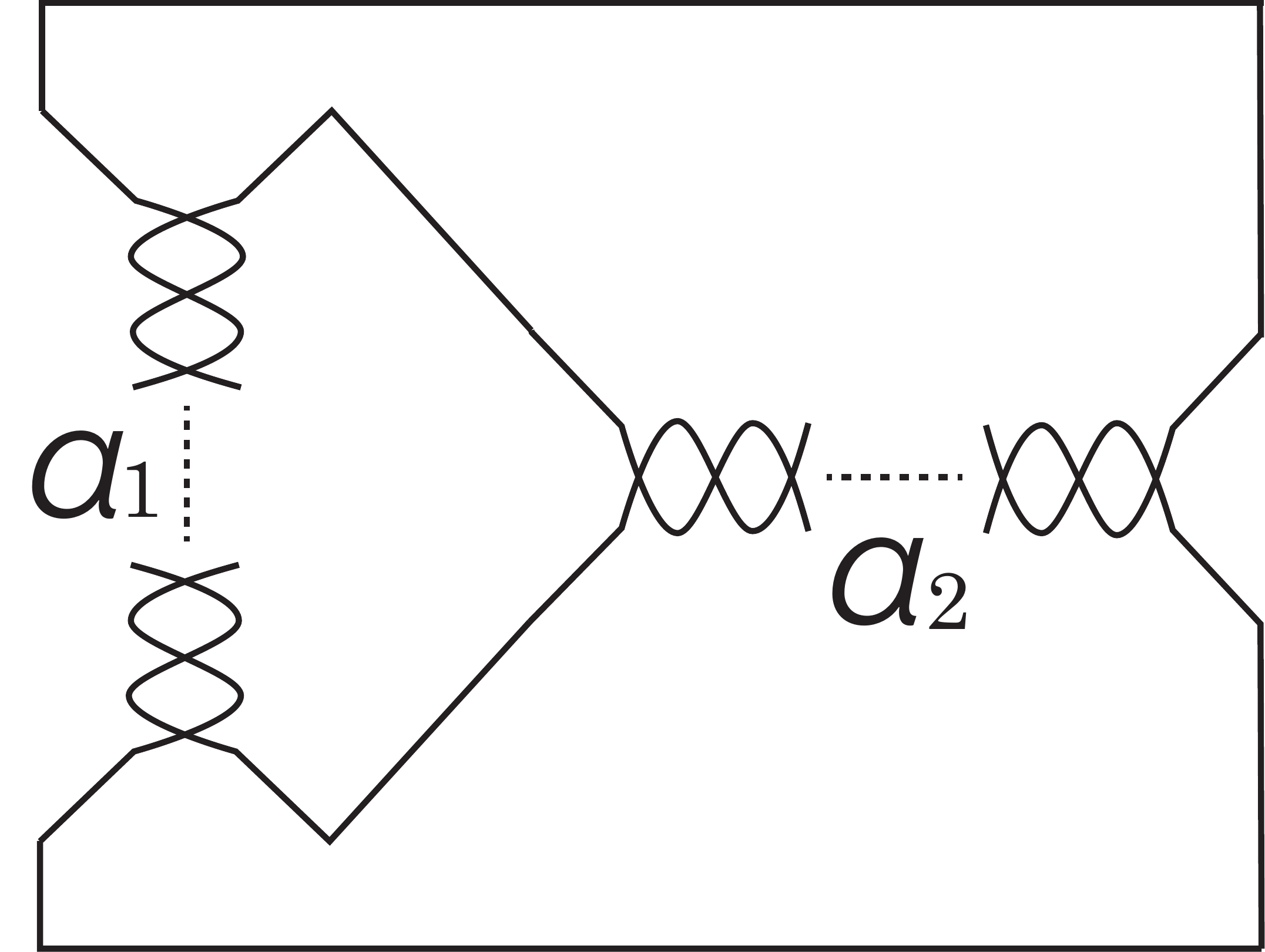}
\caption{$p(\alpha_1, \alpha_2)$.}\label{f5}
\end{figure}
\begin{figure}[h!]
\includegraphics[width=5cm]{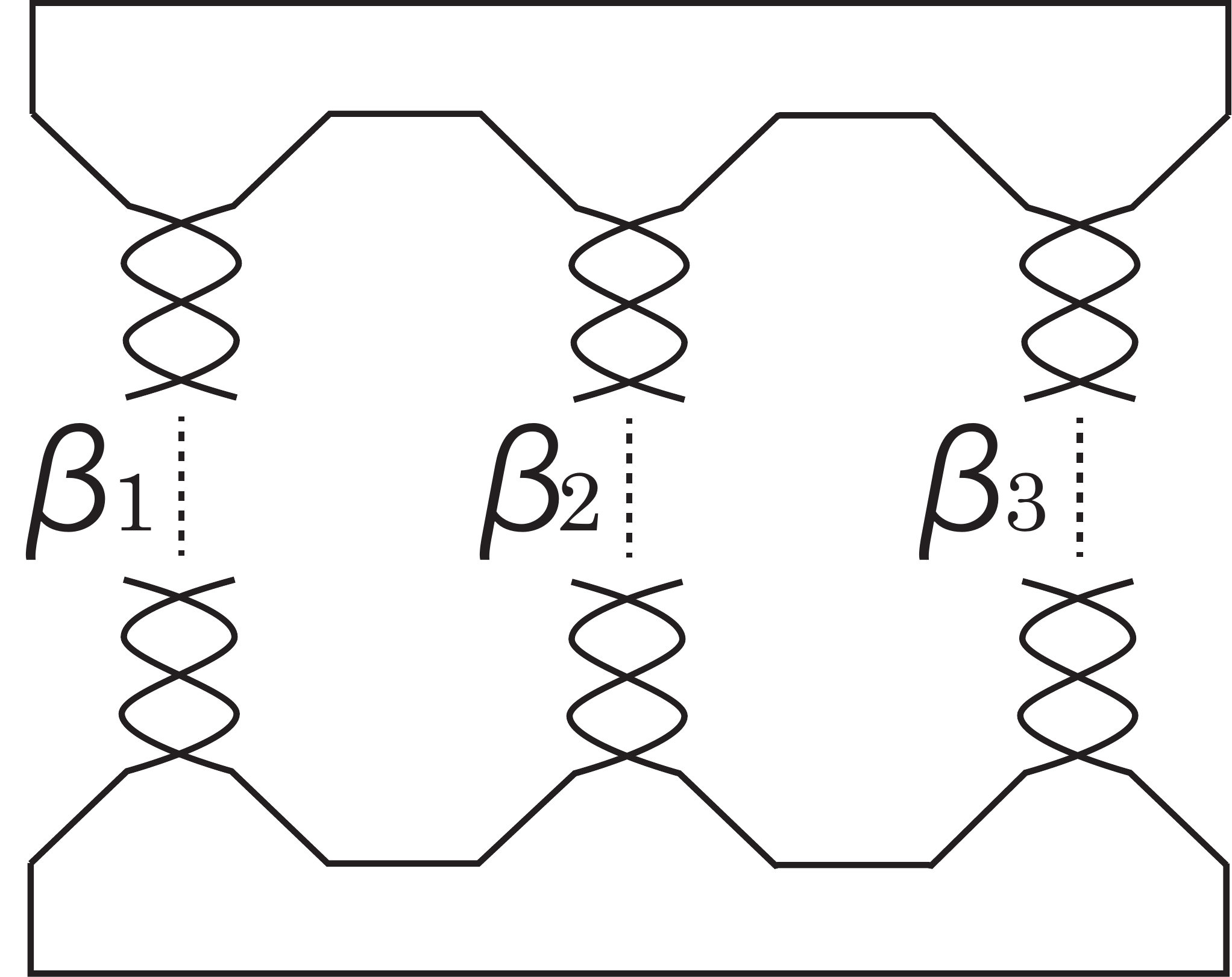}
\caption{$q(\beta_1, \beta_2, \beta_3)$.}\label{f6}
\end{figure}
\item $0 \le tr(P)-2g(P)$ ($=$ $W(P)$) is obtained by a known inequality (see \cite[Proof of Theorem 7.11]{henrich+}).  We can see that $s(P) \le c(P)+1$ and $tr(P) \le c(P)-1$ if $1 \le c(P)$.  Thus, \[W(P) = tr(P)+s(P)-c(P)-1 \le c(P)-1.\]
\item If $W(P)=c(P)-1$, $tr(P)=c(P)-1$, and $s(P)=c(P)+1$.  From \cite[Theorem 1.11]{hanakiOsaka}, if $tr(P)=c(P)-1$, $P$ is a knot projection as shown in Fig.~\ref{f5}.  In this case, $c(P)+1=s(P)=2$.  Then, $c(P)=1$.  
\end{enumerate}
\begin{figure}[h!]
\includegraphics[width=4cm]{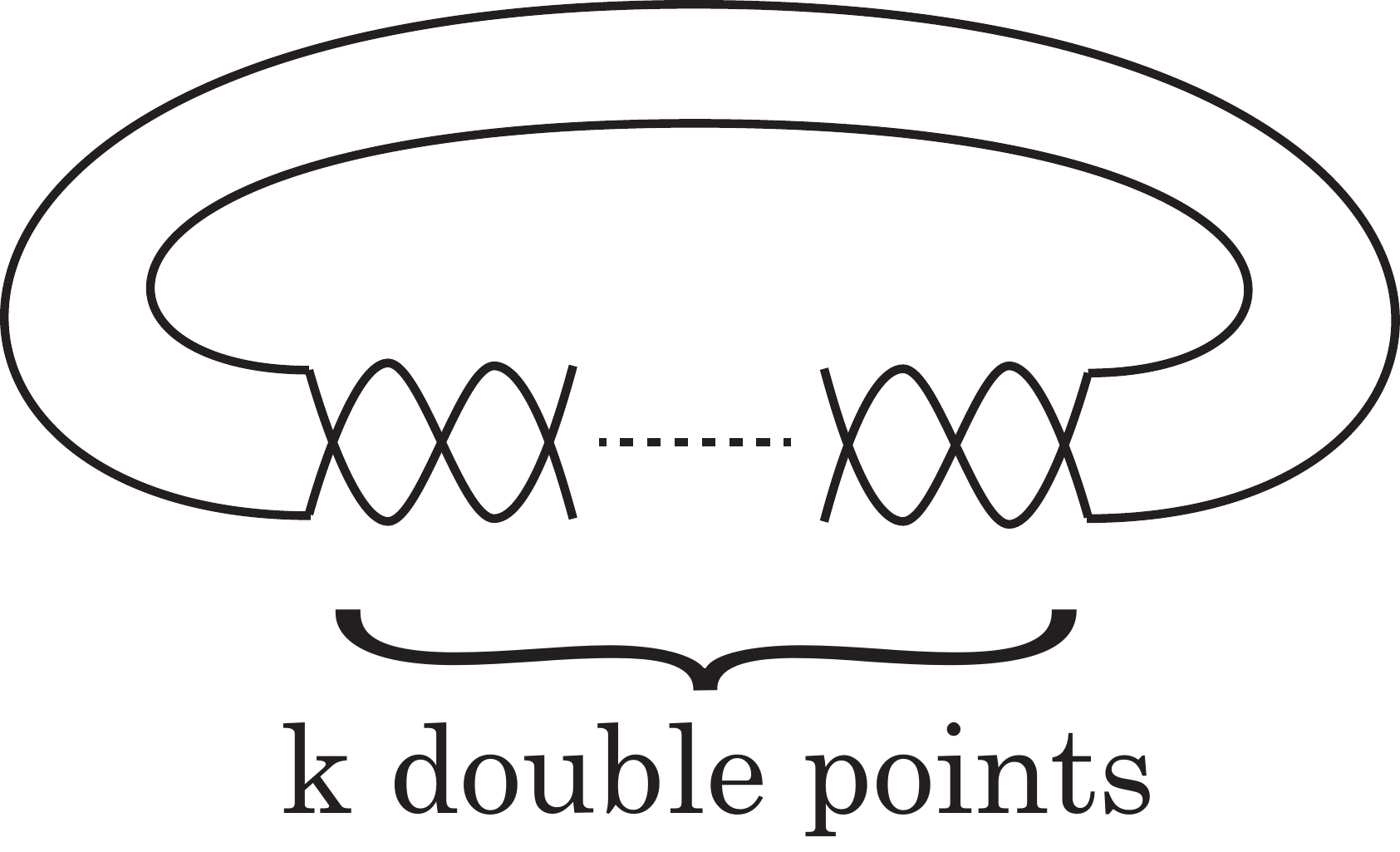}
\caption{A series of knot projections where $k$ is a positive odd integer.}
\end{figure}
\end{proof}
\section{Proof of Theorem~\ref{thm1}.}\label{sec3}
\begin{proof}
{\bf{Proof of (1).}}  It can be seen that a single weak RI\!I and a single weak RI\!I\!I are generated by a finite sequence consisting of RI, strong RI\!I, and strong RI\!I\!I from Fig.~\ref{f3}.  

\begin{figure}[h!]
\includegraphics[width=8cm]{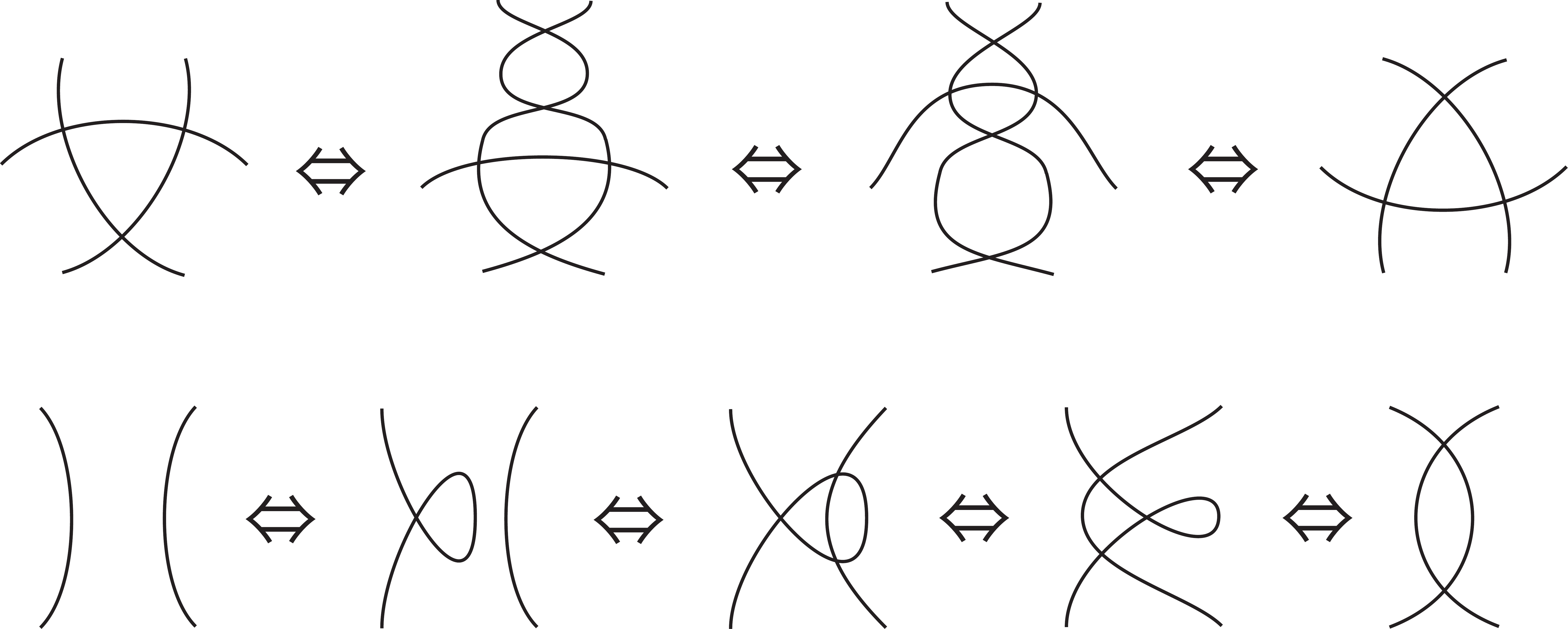}
\caption{A single RI\!I\!I consisting of two RI\!Is and a single RI\!I\!I (upper line).  A single RI\!I consisting of two RIs, a single RI\!I, and a single RI\!I\!I (lower line).}\label{f3}
\end{figure}

\noindent{\bf{Proof of (2).}}  Let $O$ be a trivial knot projection, i.e., a simple closed curve.  Here, $W(O)=0$ and $W(7_4)=2$.  Since $W(P)$ holds the property of Theorem~\ref{thm3}~(3), we have the claim.  
\end{proof}

\section{Other equivalences.}\label{sec5}
\begin{definition}
(1) Two knot projections $P$ and $P'$ are ($1$, $s2$, $w3$) homotopic if $P$ and $P'$ are related by a finite sequence consisting of RI, strong RI\!I, and weak RI\!I\!I.  

\noindent(2) Two knot projections $P$ and $P'$ are ($1$, $w2$, $s3$) homotopic if $P$ and $P'$ are related by a finite sequence consisting of RI, weak RI\!I, and strong RI\!I\!I.  
\end{definition}
\begin{proposition}

\noindent $(1)$ Any two knot projections are $(1, s2, w3)$ homotopic.  

\noindent $(2)$ Any two knot projections are $(1, w2, s3)$ homotopic.  
\end{proposition}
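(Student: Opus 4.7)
The plan is to bootstrap from Theorem~\ref{thm1}~(1), which says any two knot projections are strongly $(1,2,3)$ homotopic, i.e., related by a finite sequence of RI, strong RI\!I, and strong RI\!I\!I moves. Thus part (1) reduces to showing that a single strong RI\!I\!I can be generated by a finite sequence of RI, strong RI\!I, and weak RI\!I\!I moves; part (2) reduces to showing that a single strong RI\!I can be generated by a finite sequence of RI, weak RI\!I, and strong RI\!I\!I moves. Once these two local replacement lemmas are in hand, each factor of an arbitrary strong $(1,2,3)$ homotopy can be rewritten term-by-term in the allowed alphabet, giving a $(1,s2,w3)$ homotopy or a $(1,w2,s3)$ homotopy respectively.

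For the first replacement, I would exhibit a diagrammatic move sequence analogous to the upper line of Fig.~\ref{f3} with the roles of strong and weak RI\!I\!I exchanged: starting from the source of a strong RI\!I\!I, apply two strong RI\!I moves to introduce auxiliary crossings so that the central triangle acquires the orientation pattern of a weak RI\!I\!I, perform a single weak RI\!I\!I, then invert the two strong RI\!I moves to remove the auxiliary crossings. For the second replacement, I would adapt the lower line of Fig.~\ref{f3}: insert a small arc via RI, use a weak RI\!I together with a single strong RI\!I\!I to reconfigure the local strands, and then remove the auxiliary arc via RI, with the net effect of a single strong RI\!I.

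The main technical obstacle is the orientation bookkeeping. Whether an RI\!I or RI\!I\!I is strong or weak is determined by how the strand orientations agree on the participating arcs, so one must verify that at each intermediate stage of the first construction the two RI\!I moves are genuinely strong and the middle RI\!I\!I is genuinely weak, and that in the second construction the RI\!I is genuinely weak while the RI\!I\!I is genuinely strong. Since Fig.~\ref{f3} already carries out essentially the dual constructions (producing weak moves from strong ones), I expect this verification to amount to a finite case analysis on a handful of oriented local pictures, rather than requiring any conceptually new input.
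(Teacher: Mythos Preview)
Your proposal is correct and follows essentially the same strategy as the paper's proof: reduce to Theorem~\ref{thm1}~(1) and then generate the one ``missing'' strong move from the allowed moves. The only simplification the paper makes is that no new variant diagram or orientation case analysis is needed---the very same Fig.~\ref{f3} already does the job when read in the other direction: the upper line, viewed as expressing the \emph{middle} RI\!I\!I in terms of the outer moves, shows that a single strong RI\!I\!I decomposes into two strong RI\!I and one weak RI\!I\!I, and the lower line similarly shows that a single strong RI\!I decomposes into two RI, one weak RI\!I, and one strong RI\!I\!I.
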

\begin{proof}
Recall Theorem~\ref{thm1} (1): an arbitrary knot projection can be related to a trivial knot projection $O$ by a finite sequence consisting of RI, strong RI\!I, and strong RI\!I\!I.  Thus, we verify that a single strong RI\!I\!I (resp.~strong RI\!I) is created in the case (1) (resp.~(2)) as follows.  

\noindent (1) From Fig.~\ref{f3} (upper line), we can see that a single strong RI\!I\!I consists of two strong RI\!I and a single weak RI\!I\!I.  

\noindent (2) From Fig.~\ref{f3} (lower line), we can see that a single strong RI\!I consists of two RIs, a single weak RI\!I and a single strong RI\!I\!I.  
\end{proof}

\section*{Acknowledgements.}
The authors would like to thank Professor Kouki Taniyama for his variable comments.

\end{document}